\newcommand{\EMP}[1]{\textcolor{red}{\emph{#1}}}
\DeclareMathOperator\Hom{Hom}
\DeclareMathOperator\End{End}
\DeclareMathOperator\GL{GL}
\newcommand\op{^\mathrm{op}}
\newtheorem{theorem}{Theorem}[section]
\newtheorem{corollary}{Corollary}[section]
\newtheorem{lemma}{Lemma}[section]
\newtheorem{definition}{Definition}[section]
\def\oge{\leavevmode\raise
  .3ex\hbox{$\scriptscriptstyle\langle\!\langle\,$}}
\def\feg{\leavevmode\raise
  .3ex\hbox{$\scriptscriptstyle\,\rangle\!\rangle$}}
\newcommand{\inv}{\mathsf{inv}}
\newcommand{\mm}{\mathfrak{m}}
\begin{document}

\title{Azumaya algebras and Barr's Theorem}

\author{Thierry Coquand}
\affil{Computer Science and Engineering Department,
  University of Gothenburg, Sweden,
  \href{mailto:coquand@chalmers.se}{coquand@chalmers.se}
}
\author{Henri Lombardi}
\author{Stefan Neuwirth}
\affil{Université Marie et Louis Pasteur, CNRS, LmB, FR-25000 Besançon, France,
  \href{mailto:henri.lombardi@umlp.fr}{henri.lombardi@umlp.fr},
  \href{mailto:stefan.neuwirth@umlp.fr}{stefan.neuwirth@umlp.fr}
}
\date{\today}
\maketitle


\section*{Introduction}

The goal of this paper is to show how one can use the notion of classifying
topos of a coherent
theory \cite{topos,CLR,Wraith}, here the theory of separably closed local rings
\cite{Wraith},
to prove some basic results about Azumaya algebras in a constructive setting.

A fundamental invariant associated to a commutative field $F$ is its \EMP{Brauer group}, 
whose elements are the division algebras over $F$, up to isomorphism.
Alternatively an element of the Brauer group can be
described as a central simple algebra, up to Morita equivalence, and the group operation is then the tensor
product. Two important characterisations of the notion of central simple algebra are the
following:\footnote{We study these characterisations in a constructive setting in the reference \cite{CLN}.}
\begin{enumerate}
\item an algebra $A$ over $F$ which is finitely generated
  and such that the canonical map $A\otimes_F A\op\rightarrow \End_F(A)$
  is an isomorphism; or, equivalently,
\item an algebra $A$ which becomes a matrix algebra over an algebraic extension of $F$, or, equivalently, which
  becomes a matrix algebra over an algebraic and separable extension of $F$.
\end{enumerate}

This notion of central simple algebra over a \EMP{field} has been generalised to the notion of
Azumaya algebra over an arbitrary \EMP{commutative ring} $k$. The definition is a generalisation of the first
characterisation: an algebra $A$ which is finitely generated projective over the ring~$k$ 
and such that the canonical map $A\otimes_k A\op\rightarrow \End_k(A)$
is an isomorphism. In this paper, we show in a constructive setting that this definition
is equivalent to a suitable generalisation of the second characterisation, using a constructively valid
version of Barr's Theorem \cite{topos}. 

\section{Some coherent theories}

We consider some coherent theories \cite{topos} in the language of rings.

\subsection{Theory of local rings}

The theory of \EMP{local rings} can be formulated in a coherent way with the axioms
$$0\neq 1\qquad\qquad\inv(x)\vee\inv(1-x)$$
where $\inv(x)$ denotes $\exists_y~1 = xy$.

If we add to our language
a new predicate $\mm(x)$ with axioms
$$\inv(x)\vee \mm(x)\qquad\qquad\neg (\mm(x)\wedge \inv(x))$$
then we get the theory of local rings that are \EMP{residually discrete},\footnote{The terminology ``discrete''
  for a set with a decidable equality, while conflicting with some topological intuitions, is well established in
  constructive mathematics since the work of Bishop \cite{Bishop}, and we follow this usage in this paper.} i.e.\ where
the property of being invertible is now decidable, and the predicate $\mm(x)$
corresponds to the unique maximal ideal of the ring. In this theory, we can express
in a coherent way
that the ring is Henselian by stating that for any monic polynomial $f$,
if we have $u$ such that $f(u)\in \mm$ and $f'(u)\notin \mm$ then we can find $\alpha$
such that $f(\alpha) = 0$ and $\alpha-u \in \mm$. Equivalently \cite{ALP},
any monic polynomial of degree $l+1$
which is of the form $X^l(X-1)$ mod.~$\mm$
has a root which is equal to $1$ mod.~$\mm$.
The work \cite{ALP} shows in a constructive setting for this theory
the lifting property for idempotents of a finitely generated commutative algebra over
a Henselian local ring.

It does not seem possible however to express the
property of being Henselian in a coherent way in the language of local rings (i.e.\ without introducing
the predicate $\mm(x)$).
Surprisingly, as observed by G. Wraith \cite{Wraith}, the property of being
\EMP{separably closed}, corresponding classically to the property of being Henselian with the residue field being separably closed, \EMP{can be} expressed in a coherent way.
In this paper, we present some remarks connected to this observation.
The first one is a small variation of Wraith's axiomatisation.
Both axiomatisations can be used to define the étale site over a ring.
Finally, we show in a constructive setting
that Azumaya algebras are exactly the algebras
that are locally matrix algebras for the étale topology.

\subsection{Theory of separably closed  local rings}

If $R$ is a ring, and $a_1,\dots,a_n$ elements of $R$, we write $(a_1,\dots,a_n)$ for the ideal of $R$ generated
by $a_1,\dots,a_n$.

We define, following Joyal \cite{Joyal}, the \emph{Zariski spectrum} of $R$ as the universal support
$D\colon R\rightarrow Z(R)$ over $R$. A \emph{support} is a map $d\colon R\rightarrow M$ into a bounded distributive lattice $M$
satisfying the relations
$$
d(1) = 1\qquad\qquad d(0) = 0\qquad\qquad d(a+b)\leqslant d(a)\vee d(b)\qquad\qquad d(ab) = d(a)\wedge d(b)
$$
We write $d(a_1,\dots,a_n)$ for $d(a_1)\vee\cdots\vee d(a_n)$. It is direct that $d(a)\leqslant d(a_1,\dots,a_n)$
as soon as $a$ belongs to the radical of the ideal $(a_1,\dots,a_n)$. Conversely, it is elementary to check
that we get a support if we define $D(a_1,\dots,a_n)$ to be the radical of the ideal $(a_1,\dots,a_n)$, and this
is a concrete way to build the universal support.

In particular, this shows that $D(a_1,\dots,a_n) = 1$ if, and only if, $(a_1,\dots,a_n) = 1$.

It is also direct, for any support $d$, to see that $d(a,b) = d(a+b,ab)$ and more generally
that
$$d(a_1,\dots,a_n) = d(c_1,\dots,c_n)\text,$$
where  $c_1,\dots,c_n$ are the elementary symmetric functions of $a_1,\dots,a_n$.

\medskip

For the formulation of the theory of separably closed local rings, we
need the notion of universal
splitting algebra \cite[Sect.~III-4]{LQ} $L = R[x_1,\dots,x_n] = R[X_1,\dots,X_n]/I$ of
a monic polynomial
$$f = X^n-u_1 X^{n-1}+u_2X^{n-2}-\cdots$$
in $R[X]$, where $I$ is the ideal generated by 
$u_1 - \sigma_1$, \dots, $u_n - \sigma_n$, with $\sigma_1$, \dots, $\sigma_n$ in $R[X_1,\dots,X_n]$ the elementary symmetric
polynomials.
We know, using Cauchy modules \cite[Fact~III-4.3]{LQ}, that $L$ is freely generated of rank $n!$ as a module over $R$
and it follows from this that $L$ is a faithfully flat extension of~$R$.

For the notion of faithfully flat extension, we follow the definition in \cite[Sect.~VIII-6]{LQ}: it is a commutative $R$-algebra $i\colon R\rightarrow S$
such that

\begin{itemize}
\item if $u$ is a row vector in $R^n$ and $v$ a column vector in $S^n$ such that $i(u)v = 0$ then we can find a matrix $M$ with
  coefficients in $R$ and $w$ such that $uM = 0$ and $v = i(M)w$;
\item if $i(a)$ belongs to the ideal $(i(a_1),\dots,i(a_n))$ in $S$ then $a$ belongs to the ideal $(a_1,\dots,a_n)$ in $R$.
\end{itemize}

It then follows \cite[Sect.~VIII-6]{LQ} that $i$ is injective. It is also clear from
this definition that $S$ is faithfully flat over $R$
when it is a finitely generated free $R$-module of rank $>0$.

\begin{sloppypar}
  We define $\delta_i(f)$ to be the element in $R$ equal to
  $\sigma_i(f'(x_1),\dots,f'(x_n))$ and $\Delta(f)$ to be $D(\delta_1(f),\dots,\delta_n(f))$.
\end{sloppypar}

We say that $f$ is \emph{unramifiable} if, and only if, $1 = \Delta(f)$.
Since
$\Delta(f)=D(f'(x_1),\dots,f'(x_n))$, 
this condition is equivalent to $(f'(x_1),\dots,f'(x_n)) = 1$ in the universal
splitting algebra $L$.

Over a residually discrete local ring with maximal ideal $\mm$,
to be unramifiable means that the polynomial $f$ has a simple root in some extension of the residue field $R/\mm$.
This holds in classical mathematics, but also constructively
with a dynamical interpretation of extensions of $R/\mm$.

\begin{lemma}\label{simple}
  Let $R$ be a local ring, residually discrete with maximal ideal $\mm$ and residue field $R/\mm$.
  If $g$ is a monic polynomial in $R[X]$ which has a simple root in some nontrivial commutative $R/\mm$-algebra, then
  $g$ is unramifiable.
\end{lemma}

\begin{lemma}[{\cite[Lemme IX-7.1]{LQ2}}]\label{unram}
  If $g = (X-a_1)\cdots (X-a_p) f$ in $R[X]$ with $f$ monic of degree $n$, $\Delta(g) = 1$, and $R$
  is local, then $g'(a_1)$ is invertible or \dots\ or $g'(a_p)$ is invertible or
  $\Delta(f) = 1$.
\end{lemma}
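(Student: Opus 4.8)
The plan is to reduce everything to a single inequality in the Zariski lattice that holds over an \emph{arbitrary} base ring, and to use the hypothesis that $R$ is local only at the very last step. First I would pass to the universal decomposition algebra $L_g = R[y_1,\dots,y_n]$ of $g$, which is faithfully flat over $R$ and in which $g = (X-y_1)\dots(X-y_n)$; consequently $f = (X-a_1)\dots(X-a_p)(X-y_1)\dots(X-y_n)$ splits completely over $L_g$. Over such a splitting the quantity $\Delta(f)$ can be read off directly from the visible roots: since $\delta_i(f)$ is the $i$th elementary symmetric function of the family $f'(\text{roots})$, and since this value, lying in $R$, does not depend on which complete family of roots is chosen, we have $\delta_i(f) = \sigma_i\bigl(f'(a_1),\dots,f'(a_p),f'(y_1),\dots,f'(y_n)\bigr)$ in $L_g$. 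The identity $D(b_1,\dots,b_m) = D(s_1,\dots,s_m)$ for the elementary symmetric functions $s_i$ then gives, in the Zariski lattice of $L_g$,
$$\Delta(f) = D\bigl(f'(a_1),\dots,f'(a_p),\,f'(y_1),\dots,f'(y_n)\bigr)\text.$$

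Next I would differentiate along the factorisation $f = hg$ with $h = (X-a_1)\dots(X-a_p)$. From $f' = h'g + hg'$ one gets $f'(a_j) = h'(a_j)g(a_j)$ (since $h(a_j)=0$) and $f'(y_k) = h(y_k)g'(y_k)$ (since $g(y_k)=0$ in $L_g$). Using $D(uv) = D(u)\wedge D(v)$ and $D(u)\wedge D(v)\leqslant D(v)$, the contribution of the roots of $g$ is bounded by $\bigvee_k D(g'(y_k)) = \Delta(g)$, while the contribution of the $a_j$ is exactly $D(f'(a_1),\dots,f'(a_p))$. This yields the key inequality
$$\Delta(f) \leqslant D\bigl(f'(a_1),\dots,f'(a_p)\bigr)\vee\Delta(g)$$
in the Zariski lattice of $L_g$. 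Now every element occurring here is the image of an element of the Zariski lattice of $R$, because $f'(a_j) = h'(a_j)g(a_j)$, together with the $\delta_i(f)$ and $\delta_i(g)$, all lie in $R$. Since $L_g$ is faithfully flat over $R$, the morphism from the Zariski lattice of $R$ to that of $L_g$ is injective and reflects the order (a contracted extended ideal of a faithfully flat extension is the original ideal), so the displayed inequality already holds in the Zariski lattice of $R$.

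Finally I would feed in the hypotheses, and only here use locality. From $\Delta(f)=1$ the inequality forces $D(f'(a_1),\dots,f'(a_p))\vee\Delta(g) = 1$ in the Zariski lattice of $R$, that is, $(f'(a_1),\dots,f'(a_p),\delta_1(g),\dots,\delta_n(g)) = R$. In a local ring a finite family generating the unit ideal contains an invertible element (this follows by induction from the axiom $\inv(x)\vee\inv(1-x)$), so either some $f'(a_j)$ is invertible, or some $\delta_i(g)$ is invertible; in the latter case $\Delta(g) = D(\delta_i(g)) = 1$. This is exactly the asserted disjunction.

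I expect the main obstacle to lie in the bookkeeping of the passage between $R$ and $L_g$: one must take care that $\Delta(f)$ and $\Delta(g)$ are defined through \emph{different} decomposition algebras, and justify that $\Delta(f)$ may legitimately be expressed through the concrete roots $a_j,y_k$ visible in $L_g$. Isolating the purely lattice-theoretic inequality, which is valid over any base, from the local-ring extraction step is what keeps the argument transparent and confines the role of locality to a single line.
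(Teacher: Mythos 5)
Your proposal is correct and follows essentially the same route as the paper's proof: both pass to the decomposition algebra of $g$, express $\Delta(f)$ through the roots $a_1,\dots,a_p,y_1,\dots,y_n$ visible there via the elementary-symmetric-function identity, use the factorisation $f'(a_j)=h'(a_j)g(a_j)$, $f'(y_k)=h(y_k)g'(y_k)$ to absorb the contribution of the roots of $g$ into $\Delta(g)$, descend the resulting relation $1 = D\bigl(f'(a_1),\dots,f'(a_p)\bigr)\vee\Delta(g)$ to $R$ by faithful flatness, and conclude by locality. Your only deviation is organisational: you isolate the lattice inequality as valid over an arbitrary ring before feeding in $\Delta(f)=1$, whereas the paper uses that hypothesis from the first line; the mathematics is the same.
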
  

\begin{proof}
  We write $h = (X-a_1)\cdots (X-a_p)$.
  
  We have in $R[x_1,\dots,x_n]$, the universal splitting algebra of $f$,
  $$1 = D(g'(a_1),\dots,g'(a_p), h(x_1)f'(x_1),\dots,h(x_n)f'(x_n))$$
  and hence
  $$1 = D(g'(a_1),\dots,g'(a_p), f'(x_1),\dots,f'(x_n))$$
  and from this follows
  $$1 = D(g'(a_1),\dots,g'(a_p))\vee \Delta(f)$$
  in $R[x_1,\dots,x_n]$. 
  Since the universal splitting algebra is faithfully flat, we get
  $$1 = D(g'(a_1),\dots,g'(a_p))\vee \Delta(f)$$
  in $R$, and since $R$ is local we have the conclusion.
\end{proof}  

\begin{definition}[{\cite[Définition IX-7.2]{LQ2}}]\upshape
  A local ring~$R$ is \emph{separably closed} if every unramifiable monic polynomial has a root in~$R$.
\end{definition}

\begin{lemma}[{\cite[Lemme IX-7.3]{LQ2}}]\label{other}
  If $R$ is a separably closed local ring then
  any unramifiable monic polynomial has a simple root in $R$.
\end{lemma}

\begin{proof}
  Assume that $R$ is separably closed local.
  Let $g$ be a monic unramifiable polynomial. 
  It has a root $a_1$ and we can write
  $g = (X-a_1)f_1$. By Lemma~\ref{unram}, $a_1$ is a simple root of $g$ or $\Delta(f_1) = 1$. 
  If $\Delta(f_1) = 1$ then it has a root $a_2$ and $g = (X-a_1)(X-a_2)f_2$. Then by Lemma~\ref{unram}
  again, $a_1$ or $a_2$ is a simple root of $g$ or $\Delta(f_2) = 1$, and so on until we find
  a simple root of~$g$.
\end{proof}  




\medskip

Let $k$ be a ring. We can consider the following two coherent theories for a commutative $k$-algebra $R$:

\begin{enumerate}
\item $T_1$ states that $R$ is local, and that any monic unramifiable polynomial has a root.

\item $T_2$ states that $R$ is local, and that any monic unramifiable polynomial has a \emph{simple} root.
\end{enumerate}

We have just seen that these two theories are equivalent.

The equivalence between these two theories can be
seen as a logical formulation of the ``trick'' attributed to O. Gabber described
in \cite[\href{https://stacks.math.columbia.edu/tag/02LI}{Tag 02LI}]{Stacks}.

A morphism
\[
S \longrightarrow S[X]/(f),
\]
with \(f\) monic, is finite locally free, and thus represents the
``finite locally free'' stage in Gabber's refinement of an étale covering.
Étaleness is obtained by further localising, that is,
by passing to the Zariski covering
\[
S \longrightarrow S[X]/(f)[1/f'].
\]

Lemma~\ref{other}  provides the logical ingredient of this refinement: over a local
base ring, the existence of a root of a monic unramifiable polynomial implies the
existence of a simple root. Internally, this shows that coverings generated by
finite locally free morphisms of the above form can be refined, after a Zariski
localisation, to étale morphisms. In this sense, the equivalence between the
two theories $T_1$ and $T_2$ is the internal counterpart of Gabber's trick:
finite locally free coverings suffice to generate the same local objects as
étale coverings, up to Zariski refinement.

\medskip

These two coherent theories define two different \emph{sites} (notions of covering), in the sense of Grothendieck (see e.g.\ \cite{topos}), over the opposite of
the category of finitely presented
commutative $k$-algebras \cite{topos}. The first theory $T_1$ corresponds to:

\begin{enumerate}
\item $S$ is covered by $S\rightarrow S[1/s_1]$, \dots, $S\rightarrow S[1/s_n]$ if $1 = (s_1,\dots,s_n)$;
\item $S$ is covered by $S\rightarrow S[X]/(f)$ if $f$ is monic and unramifiable.
\end{enumerate}

The second theory $T_2$ corresponds to:

\begin{enumerate}
\item $S$ is covered by $S\rightarrow S[1/s_1]$, \dots, $S\rightarrow S[1/s_n]$ if $1 = (s_1,\dots,s_n)$;
\item $S$ is covered by $S\rightarrow S[X]/(f)[1/f']$ if $f$ is monic and unramifiable.
\end{enumerate}

Both define the same corresponding \emph{classifying topos} of the
theory of separably closed local commutative $k$-algebras \cite{topos}.
In this sheaf model, we have a ``generic'' local commutative $k$-algebra $R$ which is separably closed. 

We also may state, following Wraith,

\begin{theorem}[{\cite[Lemmes IX-7.4, IX-7.5]{LQ2}}]\label{separable}
  If $R$ is a local ring which is also residually discrete, then $R$ is separably closed if, and only if,
  $R$ is Henselian and its residue field is separably closed.
\end{theorem}

\begin{proof}
  Let $g$ be a monic polynomial which has residually a simple root $u$. We show by induction on the degree of $g$
  that $g$ has a root in $R$ which is equal to $u$ mod.~$\mm$.
  The polynomial $g$ is unramifiable by Lemma~\ref{simple}, since it has residually a simple root,
  and it has a root $a$. If $a  \neq u$ mod.~$\mm$ then we write
  $g = (X-a)f$, and $u$ is residually a simple root of $f$.
\end{proof}

Thus a residually discrete local ring is separably closed if and only if it is ``strictly Henselian''.

\section{Some lemmas about Henselian rings}

In the next section, we need a generalisation of a result about Henselian rings. In order to make the paper self-contained
we mention here this result, Theorem~\ref{thm1}, taken from the reference \cite{ALP}.

We assume in this section that $R$ is local residually discrete with maximal ideal $\mm$ and Henselian.

Lemma~\ref{lem1} is a main step in the proof of Theorem~\ref{thm1}: we provide a variation of the proof that requires less constructive Galois theory than the original one in \cite{ALP}.

\begin{lemma}[{\cite[Prop.~5.7]{ALP}}]\label{lem1}
  Let $f$ be a monic polynomial  of degree $n$
  in $R[X]$ and $R[x] = R[X]/(f)$. If we have $e(X)$ in $R[X]$ such that $e(x)$
  is idempotent
  mod.~$\mm R[x]$, then we can find $u$ idempotent in $R[x]$ with $u = e(x)$ mod.~$\mm R[x]$.
\end{lemma}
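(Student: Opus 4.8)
The plan is to translate the problem into the language of the multiplication endomorphism and its characteristic polynomial, so that the Henselian hypothesis on $R$ can be applied to a polynomial over $R$ rather than over $R[x]$. Write $A=R[x]$, which is free of rank $n=\deg P$ over $R$, and let $M_e$ denote the $R$-linear multiplication-by-$e$ map on $A$, an $n\times n$ matrix over $R$. Let $C(T)=\det(T\,\mathrm{Id}-M_e)\in R[T]$ be its characteristic polynomial, monic of degree $n$. By Cayley--Hamilton $C(M_e)=0$, and evaluating at $1\in A$ gives $C(e)=0$ in $A$. Since $e$ is idempotent mod~$\mm$, the reduced matrix $\overline{M_e}$ is idempotent over the residue field $k=R/\mm$, so its eigenvalues are $0$ and $1$ and $\overline C(T)=T^a(T-1)^b$ with $a+b=n$. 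Because $R$ is residually discrete I first dispose of the degenerate cases: if $a=0$ then $\bar e=1$ and I take $u=1$, and if $b=0$ then $\bar e=0$ and I take $u=0$; so I assume $a,b\geqslant 1$.

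First I would produce, from the Henselian hypothesis, a comaximal factorization $C=C_0C_1$ in $R[T]$ with $C_0$ monic and $\equiv T^a$ and $C_1$ monic and $\equiv (T-1)^b$ modulo~$\mm$. Since $\overline{C_0}=T^a$ and $\overline{C_1}=(T-1)^b$ are coprime in $k[T]$, their resultant is a unit of $k$, hence $\mathrm{Res}(C_0,C_1)$ is a unit of $R$ (as $R$ is local residually discrete); this yields a Bézout identity $UC_0+VC_1=1$ in $R[T]$. Then I set $u=U(e)\,C_0(e)\in A$. Using $C(e)=C_0(e)C_1(e)=0$ together with $UC_0=1-VC_1$, a direct computation gives $u^2=u$, so $u$ is a genuine idempotent of $R[x]$. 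Finally, reducing modulo~$\mm$ and using that any polynomial $g$ evaluated at the idempotent $\bar e$ equals $g(0)(1-\bar e)+g(1)\bar e$, one checks from $\overline{C_0}(0)=0$, $\overline{C_0}(1)=1$ and $\overline U(1)=1$ that $\bar u=\bar e$, as required.

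The main obstacle is the very first step: extracting the comaximal factorization $C=C_0C_1$ over $R$. This is exactly where the Henselian hypothesis enters, and the subtlety is that our axiom is phrased only as a root-finding property for polynomials that are residually of the form $X^l(X-1)$, whereas $\overline C=T^a(T-1)^b$ has higher multiplicities. I would therefore argue, as in the proof of the Theorem above and in \cite{ALP}, that this root-finding property does propagate to the lifting of the coprime residual factorization $\overline C=T^a\cdot(T-1)^b$ (peeling off the factor attached to the eigenvalue $1$, and symmetrically to $0$, while controlling multiplicities). Once this factorization is in hand the rest of the argument is purely formal --- Cayley--Hamilton, the resultant being a unit, the Bézout identity, and the residual verification --- and in particular it avoids any division by $2$, so it is valid in every characteristic.
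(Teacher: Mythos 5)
Your reduction is formally sound \emph{once the comaximal factorization $C=C_0C_1$ is available}: $C(e)=0$ follows from Cayley--Hamilton applied to multiplication by $e$ on the free module $R[x]$, the residual coprimality of $T^a$ and $(T-1)^b$ makes $\mathrm{Res}(C_0,C_1)$ invertible in the local ring $R$, and $u=U(e)C_0(e)$ then satisfies $u(1-u)=U(e)V(e)C(e)=0$ and $\bar u=\overline{U}(1)\bar e=\bar e$; the degenerate cases $a=0$, $b=0$ are handled correctly using residual discreteness. But the step you yourself flag as the ``main obstacle'' is a genuine gap, and within this paper it is a circularity: the factorization lifting you need is exactly Lemma \ref{lem2}, whose proof in the paper is \emph{deduced from} Lemma \ref{lem1} --- the very statement you are proving. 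Moreover your sketched repair (``peeling off the factor attached to the eigenvalue $1$ \dots while controlling multiplicities'') fails as stated: the Henselian hypothesis in this paper is only the axiom that a monic polynomial residually of the form $X^l(X-1)$, i.e.\ with a residually \emph{simple} root, has a root congruent to $1$. When $a,b\geqslant 2$, the polynomial $C$ with $\overline{C}=T^a(T-1)^b$ has no residually simple root, and neither does any partial quotient obtained by removing roots, so the axiom never applies to $C$ directly and no induction of the kind used for $X^l(X-1)$ can even start.

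What is missing is precisely the idea the paper's proof supplies: one must manufacture an \emph{auxiliary} polynomial to which the axiom does apply. The paper passes to the universal decomposition algebra $R[x_1,\dots,x_n]$ of $P$, forms the products $e_I=\prod_{i\in I}e(x_i)$, chooses $k$ so that the $e_I$ with $|I|=k$ form a residual fundamental system of orthogonal idempotents, and considers $Q(X)=\prod_{|I|=k}(X-e_I)$, which \emph{is} residually of the form $X^{N-1}(X-1)$; the Henselian axiom then yields a simple root $\alpha$, and the Lagrange-type elements $v_I=\frac{1}{\lambda}\prod_{J\neq I}(\alpha-e_J)$ produce the lifted idempotent. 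Alternatively, your route could be made non-circular by importing from \cite{ALP} the (nontrivial) theorem that simple-zero Henselianity implies the lifting of residually coprime factorizations --- e.g.\ via a multidimensional Hensel lemma, the relevant Jacobian being the Sylvester matrix, invertible by coprimality --- but that is exactly the content this section sets out to reprove self-containedly, so as written your proposal defers the essential argument rather than supplying it.
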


\begin{proof}    
  Let $L = R[x_1,\dots,x_n]$ be the universal splitting algebra of $f$.
  This is
  a free $R$-module over $R$ of dimension $n!$ and the symmetric group acts on it \cite[Fact~III-4.4]{LQ}.
  We may identify $R[x]$ with the subalgebra $R[x_1]$ and we have
  $e(x_1)$ in $R[x_1]$ which is idempotent mod.~$\mm R[x_1]$. We show that we have
  $u_1$ in $R[x_1]$ which is idempotent in $R[x_1]$ and equal to $e(x_1)$ mod.~$\mm R[x_1]$.

  The product $e(x_1)\cdots e(x_n)$ is in $R$ and it is an idempotent mod.~$\mm$, so it is $1$ or $0$ mod.~$\mm$.
  If it is $1$, we have $e(x_1) = 1$ mod.~$\mm R[x_1]$ and we can take $u_1 = 1$.

  We can thus assume $e(x_1)\cdots e(x_n) = 0$ mod.~$\mm$.
  We write $e_I = \prod_{i\in I}e(x_i)$ for $I$ subset of $1,\dots,n$.
  We let\footnote{The $e_I,~|I|=k$, are pairwise conjugate by the action of the symmetric group
    and equality is decidable mod.~$\mm L$.}
  $k<n$ be such that $e_I \neq 0$ mod.~$\mm L$ if $|I| = k$ and $e_I = 0$ mod.~$\mm L$ if $|I| = k+1$.
  Note that the $e_I$
  for $|I| = k$ are pairwise orthogonal mod.~$\mm L$.
  We have $\Sigma_{|I|=k} e_I = 1$ mod.~$\mm$ since it is an element in $R$ which is idempotent mod.~$\mm$.
  Hence the family of the $e_I,~|I| = k$, forms an
  FSOI\footnote{A ``Fundamental System of Orthogonal Idempotents'' (\cite[Sect.~II-4]{LQ}).}
  mod.~$\mm L$.

  We deduce that
  $$g(X) = \prod_{|I|=k} (X-e_I)\text,$$
  which is a monic polynomial in $R[X]$, is of the form $X^{N-1}(X-1)$ mod.~$\mm R[X]$ with $N = \binom{n}{k}$.

  Since $R$ is Henselian, we have $\alpha$ in $R$, $\alpha = 1$ mod.~$\mm$, such that $g(\alpha) = 0$; furthermore $\lambda = g'(\alpha)$ is
  invertible with $\lambda = 1$ mod.~$\mm$. If we write $g_I = g/(X-e_I)$ then $g'(X) = \Sigma_I g_I$. We take
  $$v_I = \frac{1}{\lambda}g_I(\alpha)$$
  and we then have that the $v_I,~|I| = k$, form an
  FSOI in $R[x_1,\dots,x_n]$. We have $v_I = \Pi_{J\neq I} (1-e_J) = e_I$ mod.~$\mm L$.
  Let $\mathcal I_1$ be the set of all $I$ such that $1$ is in $I$ and $|I| = k$.
  If we write $u_1$ for $\Sigma_{I\in \mathcal I_1} v_I$,
  we have $u_1$ in $R[x_1]$ and $u_1$ is equal to $\Sigma_{I\in \mathcal I_1} e_I = e(x_1)$ mod.~$\mm L$.
  This equality follows from $e(x_1) = e(x_1)(\Sigma_I e_I) = \Sigma_I e(x_1)e_I = \Sigma_{I\in \mathcal I_1}e_I$ mod.~$\mm L$.
  Hence $e(x_1) - u_1\in R[x_1]\cap \mm L = \mm R[x_1]$.
\end{proof}


\begin{theorem}[{\cite[Theorem~5.9]{ALP}}]\label{thm1}
  If $A$ is a commutative
  $R$-algebra which is finitely generated as an $R$-module and $a$ in $A$ is idempotent mod.~$\mm A$,
  then we can find $e$ in $A$ idempotent in $A$ and such that $e = a$ mod.~$\mm A$.
\end{theorem}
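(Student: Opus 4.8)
The plan is to reduce to the monogenic situation and then invoke the Hensel factorisation of Lemma~\ref{lem2}. Since $a$ commutes with itself, I would work throughout inside the commutative subalgebra $R[a]\subseteq A$, which is finitely generated as an $R$-module: because $A$ is finite over $R$ the element $a$ is integral, so there is a monic polynomial $P\in R[X]$ with $P(a)=0$, and $R[a]$ is spanned by $1,a,\dots,a^{\deg P-1}$. Every idempotent I construct will live in this commutative ring, which is what lets me sidestep the possible noncommutativity of $A$.

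Next I would look at the reduction $\bar P\in k[X]$ over the residual field $k=R/\mm$. Writing $\bar P=X^r g$ with $g(0)\neq 0$ (pull out the largest power of $X$, which is effective over the discrete field $k$), the two monic factors $f=X^r$ and $g$ are coprime mod.~$\mm$ since $X\nmid g$. Lemma~\ref{lem2} then lifts this to a genuine factorisation $P=FG$ in $R[X]$ with $F=f$ and $G=g$ mod.~$\mm$, both monic. The decisive point is that evaluating at $a$ gives an \emph{exact} relation $F(a)G(a)=P(a)=0$ in $R[a]$, even though the factorisation of $\bar P$ was only a statement mod.~$\mm$.

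To produce the idempotent I would show that $F(a)$ and $G(a)$ are comaximal in $R[a]$. Since $f,g$ are coprime there is a B\'ezout relation in $k[X]$; lifting it to $R[X]$ and using $F=f,\ G=g$ mod.~$\mm$ yields $u(a)F(a)+v(a)G(a)=1+w(a)$ with $w$ having coefficients in $\mm$, so $(F(a),G(a))+\mm R[a]=R[a]$. As $R[a]$ is a finite $R$-module and $R$ is local, Nakayama's Lemma upgrades this to $(F(a),G(a))=R[a]$, hence $pF(a)+qG(a)=1$ for some $p,q\in R[a]$. Then $e=pF(a)$ satisfies $e(1-e)=pq\,F(a)G(a)=0$, so $e$ is a genuine idempotent of $R[a]\subseteq A$. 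Finally, reducing the B\'ezout relation in $A/\mm A$, where $\bar a$ \emph{is} idempotent, and using $\bar F(\bar a)=\bar a^{\,r}=\bar a$ together with $\bar a\,\bar G(\bar a)=0$, one checks $\bar e(1-\bar a)=0$ and $\bar a(1-\bar e)=0$, whence $\bar e=\bar a$, that is $e=a$ mod.~$\mm$.

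I expect the main obstacle to be the passage from the merely residual coprimality of $f$ and $g$ to an \emph{exact} orthogonal splitting: this is exactly where the Henselian hypothesis enters, through Lemma~\ref{lem2}, and where the exact identity $F(a)G(a)=0$ (forced by integrality of $a$) combines with Nakayama to promote comaximality mod.~$\mm$ to genuine comaximality. A secondary point requiring care is choosing $f$ to be the power of $X$ rather than the factor vanishing at $1$, so that the resulting idempotent reduces to $\bar a$ and not to $1-\bar a$; the degenerate cases $\bar a=0$ and $\bar a=1$ fall out of the same formula as $e=0$ and $e=1$.
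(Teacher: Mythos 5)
Your argument is correct, and it shares the paper's backbone---pass to the commutative subalgebra $R[a]$, which is a finitely generated $R$-module, and lift a residual factorisation via Lemma~\ref{lem2}---but the decomposition and the endgame are genuinely different. The paper splits off \emph{both} critical roots, writing $F = X^n(1-X)^m h$ mod.~$\mm$ with $h(0)$, $h(1)$ invertible, lifts this to $F = PQH$, and builds $e = \mu P(a)$ from the invertibility of $H(a)$ and of $P(a)+Q(a)$; there the B\'ezout relation comes for free, since $P(a)+Q(a) \equiv a + (1-a) = 1$ mod.~$\mm$ exhibits the needed unit residually. You pull out only the power of $X$, writing $\bar P = X^r g$ with $g(0) \neq 0$ but allowing $g$ to still vanish at $1$; since $F(a)+G(a)$ is then not residually $1$, you must earn comaximality by lifting the residual B\'ezout identity and applying Nakayama to the ideal $(F(a),G(a))$ of the finite $R$-module $R[a]$---which is in substance the same machinery the paper invokes silently when it concludes that residually invertible elements of $A$ are invertible. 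Your coarser split buys a single application of Lemma~\ref{lem2} and no three-way case analysis; incidentally it also sidesteps the paper's degenerate cases, whose stated values are in fact swapped in the text (if $n=0$ then $h(0)(1-\bar a)=0$ forces $\bar a = 1$, so one should take $e=1$ there, and $m=0$ forces $\bar a = 0$, so $e=0$). The one step you should write out is the degenerate case you flagged: when $r=0$ your identity $\bar F(\bar a) = \bar a^{\,r} = \bar a$ fails, but then $\bar g(\bar a) = 0$ with $\bar g(0)$ invertible forces $\bar a = 1$ in $A/\mm A$ (multiply by $1-\bar a$ and use idempotency), and since $F=1$, $G=P$, one gets $G(a)=0$, hence $p = 1$ and $e = 1 \equiv a$ mod.~$\mm$, as your formula predicts. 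Note finally that your bookkeeping is sound on a point that is easy to get wrong: idempotency of $a$ is only available modulo $\mm A$, not modulo $\mm R[a]$, and you rightly perform the final congruence checks in $A/\mm A$, while the Nakayama step correctly lives in $R[a]$ via the lifted polynomial identity $U F + V G = 1 + W$ with $W$ having coefficients in $\mm$.
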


The following generalisation is not in \cite{ALP} and will be crucial in the proof of Lemma~\ref{main1}.

\begin{corollary}\label{corthm1}
  If $A$ is a (not necessarily commutative)
  $R$-algebra which is finitely generated as an $R$-module and $a$ in $A$ is idempotent mod.~$\mm A$,
  then we can find $e$ in $A$ idempotent in $A$ and such that $e = a$ mod.~$\mm A$.
\end{corollary}

\begin{proof}
  Write $\varphi(x) = x^2 -x$ and consider the finitely generated algebra
  $B=R[a]$. Since $\varphi(a) = a^2-a$ is in $\mm A$, the constant term of its characteristic polynomial is in $\mm A\cap R=\mm$, so that we have $N$ such that $\varphi(a)^N$ is in $\mm B$. 
  We can then consider the following Newton
  quadratic iteration of $a$ inside $B$ by defining
  inductively $u_0 = a$ and $u_{n+1} = u_n - (2u_n-1)\varphi(u_n)$ (compare \cite[Corollary~III.10.4]{LQ}). One checks that $u_n=a$ mod.~$\mm A$ and that $\varphi(u_n)$ is in $\varphi(u_{n-1})^2B$, so that it is in $\mm B$ for $n$ big enough, i.e.\ $u_n$ is idempotent
  mod.\ $\mm B$. We can now use Theorem~\ref{thm1} for the commutative $R$-algebra $B$
  to lift $u_n$ to an idempotent $e$ in $B$ such that
  $e = u_n$ mod.\ $\mm B$. We also have $e = a$ mod.\ $\mm A$.
\end{proof}

\section{Azumaya algebras}

An \emph{Azumaya algebra} $A$ over a ring $k$ is an algebra such that
\begin{enumerate}
\item $A$ is finitely generated projective;
\item the canonical map $A\otimes_k A\op\rightarrow \End_k(A)$ is an isomorphism.
\end{enumerate}

Note that we allow the trivial algebra to be an Azumaya algebra. Note also that any matrix algebra is Azumaya.

In this section $k$ will denote a commutative base ring, and $R$ will denote the ``generic'' separably closed
commutative $k$-algebra, model of the coherent theory $T_1$ described in the first section.

One can equivalently describe the algebra structure by a bilinear map $\mu\colon A\times A\rightarrow A$;
its scalar extension along $k\rightarrow S$ is the unique $S$-algebra $A_S$ through which every bilinear map extending 
$\mu$ factors. Thus all subsequent arguments can be phrased without explicit tensor notation.

Over a finitely generated free $k$-module, an algebra structure is given concretely by a multiplication table.
A module $A$, finitely generated projective
over~$k$, is given concretely by a projection matrix over $k$ \cite[Chap.~I]{LQ}, and
we can similarly represent the algebra structure by a multiplication table (so $m^3$~elements of~$k$ if the projection
matrix has finite size~$m$). We can then compute \cite[Theorem~X-1.6]{LQ} comaximal elements $s_1,\dots,s_l$ such that $A[1/s_i]$
is free over $k[1/s_i]$, and the second condition for being an Azumaya algebra becomes then that \(l\)~determinants
are invertible.\footnote{After localisation to $s_i$ the Azumaya algebra becomes free of rank $r_i=n_i^2\leqslant m$.}
From this discussion follows that if $A$ is an Azumaya algebra, given by a multiplication table,
and we have a commutative $k$-algebra $k\rightarrow S$, then the algebra $A_S$ over $S$, with the same multiplication table, is also
an Azumaya algebra.

We have the following characterisation, which generalises the result that
an algebra is central simple over a field if, and only if, it can be split by a separable extension of this field.

\begin{theorem}\label{main}
  $A$ is an Azumaya algebra over $k$ if, and only if, we can build a finite covering tree rooted at~$k$
  using the following, where at every leave the scalar extension $A_S$ is a matrix algebra:
  \begin{enumerate}
  \item $S$ is covered by $S\rightarrow S[1/s_1]$, \dots, $S\rightarrow S[1/s_n]$ if $1 = (s_1,\dots, s_n)$;
  \item $S$ is covered by $S\rightarrow S[X]/(f)$ if $f$ is monic and unramifiable.
  \end{enumerate}
\end{theorem}

We shall use the local-global principle and the descent principle with respect to faithfully flat extension
for projective modules (\cite[Theorem~VIII-6.7]{LQ}). For completeness, we recall the statement. Note that the proof
in \cite{LQ} is constructive, using the concrete definition of faithfully flat extension and the fact that, also
constructively, a module is finitely generated projective if, and only if, it is finitely presented and flat.

\begin{theorem}[Descent]\label{descent}
  Let $A\rightarrow B$ be a faithfully flat extension and $M$ an $A$-module. If $M\otimes_A B$ is finitely generated
  projective, then
  so is $M$. If we have $1 = (s_1,\dots,s_n)$ in $A$ and each $M[1/s_i]$ is finitely presented and projective
  over $A[1/s_i]$ then so is $M$.
\end{theorem}

We prove first that if we have such a covering tree then the algebra $A$ is Azumaya.

If we have such a covering tree rooted at~$k$, notice that each extension
$S\rightarrow S[X]/(f)$ for $f$ monic is faithfully flat, and
$S\rightarrow \prod_i S[1/s_i]$ is also faithfully flat if $1 = (s_1,\dots, s_n)$.
It follows then that $A$ is finitely generated projective by Theorem~\ref{descent}.
The map $A\otimes_k A\op\rightarrow \End_k(A)$ is then a bijection since it becomes
a bijection by a faithfully flat extension. (As we saw before, to be a bijection is witnessed
by the fact that some determinants are invertible, and an element is invertible if it becomes
invertible by a faithfully flat extension.)

For the converse, we look at the algebra $A$ (given concretely by its multiplication table) inside
the étale topos over $k$, using the site defined in the first section. (This is similar to the constructive
use of the algebraic closure in \cite{Mannaa}.) The generic ring $R$, defined by $R(S) = S$ for $S$ a finitely
presented commutative $k$-algebra, is, in the internal logic of this topos, a separably closed local ring. The algebra $A$
defines then an $R$-algebra $A_R(S) = A_S$, which is the Azumaya algebra given by the same
multiplication table as the one for $A$. If we can prove in intuitionistic logic that
an Azumaya algebra over a separably closed ring is a matrix algebra, we will get, using the interpretation of
intuitionistic logic in a sheaf model \cite{topos,Mannaa}, a covering tree rooted at~$k$ which yields Theorem~\ref{main}.

Internally in the classifying topos of separably closed local commutative $k$-algebras, the object $R$ is separably closed local
and the claim that $A_R$ is a matrix algebra is a coherent statement, provable in intuitionistic logic.
Such a proof yields externally a finite covering tree rooted at~$k$ satisfying the conditions above.

The converse is a consequence of the following two results.

\begin{lemma}\label{main1}
  If $A$ is an Azumaya algebra over $R$, which is a residually
  discrete  separably closed local commutative $k$-algebra, then $A$ is a matrix algebra over $R$.
\end{lemma}

\begin{proof}
  We know that $R$ is Henselian and that $R/\mm$ is separably closed by Theorem~\ref{separable}.

  We simplify (and correct slightly) the argument in Milne's course notes
  (\cite[Chapter IV, Proposition 1.6]{Milne}),
  using the constructive
  development in \cite{ALP} and \cite{CLN}. If $\mm$ is the maximal ideal of $R$, we have that $A$
  is split over $R/\mm$ (\cite{CLN}), and hence we have a matrix algebra decomposition $e_{ij}$
  mod.~$\mm A$. We lift the idempotent $e_{11}$
  mod.~$\mm A$ to some idempotent $e$ in $A$ using Corollary~\ref{corthm1}.

  The $R$-module $Ae$ is finitely generated projective and hence free since $R$ is a local ring.
  The map $A\rightarrow \End_R(Ae)$ is then an isomorphism, since it is a map between
  two free $R$-modules   of same dimension, and it is residually an isomorphism.
\end{proof}

The next proof uses Barr's Theorem in a way similar to \cite{Penon,Wraith}, but the difference
is that we only need a constructively valid version of this theorem.

\begin{corollary}
  If $A$ is an Azumaya algebra over $R$, which is a separably closed local commutative $k$-algebra,
  then $A$ is a matrix algebra over $R$.
\end{corollary}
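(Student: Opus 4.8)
The plan is to follow the proof of Lemma~\ref{main1} as closely as possible, the only difference being that here $R$ is local and separably closed but \emph{not} assumed residually discrete. So the entire task is to replace each appeal to residual discreteness by a direct use of the separably closed hypothesis. Write $\mm$ for the ideal of non-invertible elements of the local ring $R$ and $K = R/\mm$ for the residue field, which is again separably closed (but not necessarily a discrete field). The argument would then run in three steps: split $A$ over $K$, lift a rank-one idempotent from $A/\mm A$ to $A$, and identify $A$ with $End_R(Ae)$.

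First I would split $A$ over the residue field. By \cite{CLN}, a central simple algebra over a separably closed field is a matrix algebra, and I would check that the construction of the matrix units in $A/\mm A \simeq M_d(K)$ used there does not require $K$ to be discrete, so that it applies to our $K$. This yields a rank-one idempotent $\bar e_{11}$ in $A/\mm A$.

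The lifting step is the heart of the matter. In Lemma~\ref{main1} it went through Theorem~\ref{thm1}, whose proof factors an annihilating polynomial modulo $\mm$ and so uses residual discreteness; the point is to lift without factoring. The clean statement to establish is: idempotents lift along $A \rightarrow A/\mm A$ for any module-finite $R$-algebra $A$ over a strictly Henselian local ring $R$, with no decidability assumption. One natural device, at least when $2$ is invertible, is this: choose any lift $a$ in $A$ of $\bar e_{11}$, so $a^2-a$ lies in $\mm A$, and set $s = 2a-1$, whence $s^2 = 1 + 4(a^2-a)$ is a unit congruent to $1$ mod.~$\mm$. It then suffices to produce $t$ in the commutative finite subalgebra $R[a]$ with $t \equiv s$ mod.~$\mm$ and $t^2 = 1$, since then $e = \tfrac{1}{2}(1+t)$ is idempotent in $A$ and congruent to $a$ mod.~$\mm$; and producing $t$ amounts to extracting a square root of the unit $s^2$, congruent to $1$, inside $R[a]$, which one hopes to obtain from root extraction for the associated unramifiable monic polynomial over $R$.

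The hard part will be exactly this decidability-free, characteristic-free idempotent lifting. The difficulty is that $R[a]$ is a finite $R$-algebra that is \emph{not} local — indeed $\bar a = \bar e_{11}$ is a nontrivial idempotent, so $R[a]/\mm R[a]$ already splits — and $K$ is not discrete, so one cannot decompose $R[a]$ into local factors and treat the eigenvalues $0$ and $1$ separately as one would classically; moreover the square-root device above fails in residual characteristic $2$ and would have to be replaced by a characteristic-free Hensel-type argument. The task is therefore to reduce root extraction in the non-local algebra $R[a]$ to the root-extraction axiom available for the base ring $R$, constructively and without residual discreteness. Once an idempotent $e \equiv a$ mod.~$\mm$ is obtained, the conclusion is routine and needs no decidability: $Ae$ is finitely generated projective over the local ring $R$, hence free, of rank $d$; the canonical map $A \rightarrow End_R(Ae)$ is a map of free $R$-modules of the same rank whose determinant is a unit modulo $\mm$ (the map being an isomorphism over $K$), hence a unit by locality, so the map is an isomorphism and $A$ is a matrix algebra.
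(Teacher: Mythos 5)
There is a genuine gap, and you have in fact named it yourself: the decidability-free idempotent lifting in $R[a]$ is announced as ``the hard part'' and then left as a task, not carried out. A proposal whose central step is ``reduce root extraction in the non-local algebra $R[a]$ to the root-extraction axiom available for the base ring $R$'' without performing that reduction is a plan, not a proof; the square-root device you do spell out is conceded to fail in residual characteristic $2$, and even away from $2$ you give no argument that the relevant monic polynomial is unramifiable over $R$ and that its root lands in $R[a]$ rather than merely in some further extension of $R$. The first step has the same defect: the splitting of $A$ over $K = R/\mm$ in \cite{CLN} is a constructive result about \emph{discrete} separably closed fields, and ``I would check that the construction does not require $K$ to be discrete'' is precisely the kind of claim that tends to fail constructively, since matrix-unit constructions involve rank computations and zero tests unavailable over a non-discrete field. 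So both nontrivial steps of your three-step plan are unverified.

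The paper avoids all of this with a purely logical argument, which is worth internalizing. The hypothesis that $A$ is an Azumaya algebra over a local separably closed $k$-algebra is formulated in a coherent theory, and the conclusion that $A$ is a matrix algebra is itself a coherent statement. Residual discreteness amounts to adding a predicate $\mm(x)$ with the axioms $\inv(x)\vee\mm(x)$ and $\neg(\mm(x)\wedge\inv(x))$, and the syntactic version of Barr's Theorem (alternatively, the Friedman--Dragalin translation of \cite{Avigad}, following the argument in \cite{topos}), which is constructively valid, shows that this addition is conservative for coherent sequents. Hence the proof of Lemma \ref{main1}, which uses residual discreteness freely (via Theorem \ref{thm1} and the splitting over the discrete residue field), already yields the corollary with that hypothesis erased---no new algebra is needed, and in particular no decidability-free idempotent lifting has to be invented. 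Your instinct that eliminating the discreteness assumption by hand is hard is sound; the point of the corollary is that coherence eliminates it for free.
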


\begin{proof}
  For a given Azumaya algebra, the conclusion is coherent \cite{topos,Mannaa}, and the hypothesis that $A$
  is an Azumaya algebra is formulated in a coherent theory. Since it is proved with the extra
  hypothesis that $R$ is residually discrete, the syntactic version of Barr's Theorem\footnote{We can actually
    follow the argument in \cite{topos} and apply Friedman-Dragalin's translation
    \cite{Avigad} with respect to the proposition
    expressing that $A$ is a matrix algebra.}
  (constructively valid) shows that it can be proved without this hypothesis.
\end{proof}

Theorem~\ref{main} follows by interpreting this result in the classifying topos of the theory of
separably closed local commutative $k$-algebras.

The statement of Theorem~\ref{main} can be seen as a generalisation of the fact that over a field, a
central simple algebra becomes a matrix algebra by a faithfully flat extension \cite{CLN} (classically
over an algebraic extension).

\medskip

We can use such a covering tree to give arguments by ``tree induction''. We give some simple examples.

\begin{lemma}\label{central}
  Any Azumaya algebra $A$ is central.
\end{lemma}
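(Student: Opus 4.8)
The plan is to argue by tree induction along the tree provided by Theorem \ref{main}. Recall that $A$ is \emph{central} when the structure map $k\to Z(A)$ onto the centre $Z(A)=\{a\in A : ax=xa \text{ for all } x\in A\}$ is an isomorphism. At each node $S$ of the tree we consider the base change $A\otimes_k S$ over $S$, the root being $A$ over $k$; the leaves are matrix algebras. For the base case, over any commutative ring $S$ the centre of $M_n(S)=End_S(S^n)$ consists exactly of the scalar endomorphisms: an explicit computation shows that any element commuting with all the $e_{ij}$ is of the form $s\,\mathrm{id}$, so $S\to Z(M_n(S))$ is an isomorphism. It then remains to propagate centrality from the children of a node to the node itself, which is where the two coverings enter.

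The key point is that centrality \emph{descends} along the faithfully flat covers that build the tree. Both coverings are faithfully flat: $S\to S[X]/(P)$ for $P$ monic is free of rank $\deg P$, and $S\to\prod_i S[1/u_i]$ is faithfully flat when $1=(u_1,\dots,u_n)$. Moreover, at every node the base change $A\otimes_k S$ is again finite projective over $S$, since $A$ is finite projective over $k$. So it suffices to prove the following descent statement: if $k\to k'$ is faithfully flat, $B$ is finite projective over $k$, and $B\otimes_k k'$ is central over $k'$, then $B$ is central over $k$.

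To prove this I would realise the centre as a kernel. Writing $\varphi\colon B\to End_k(B)$ for $\varphi(b)(x)=bx-xb$, we have $Z(B)=\ker\varphi$. Because $B$ is finite projective, tensoring the left-exact sequence $0\to Z(B)\to B\xrightarrow{\varphi}End_k(B)$ with the flat module $k'$ and using the canonical isomorphism $End_k(B)\otimes_k k'\cong End_{k'}(B\otimes_k k')$ identifies $Z(B)\otimes_k k'$ with $Z(B\otimes_k k')$. The canonical map $k\to Z(B)$ therefore becomes, after $\otimes_k k'$, the canonical map $k'\to Z(B\otimes_k k')$, which is an isomorphism by hypothesis; since $k\to k'$ is faithfully flat, $k\to Z(B)$ is itself an isomorphism (\cite{LQ}). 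For a localisation covering one first observes that centrality over a finite product ring $\prod_i S_i$ is equivalent to centrality of each factor, so the centralities of the $A\otimes_k S[1/u_i]$ assemble into centrality over $\prod_i S[1/u_i]$ before descent is applied; for an unramifiable covering the single child already provides the faithfully flat map.

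Combining these, induction from the leaves up to the root $k$ yields that $A=A\otimes_k k$ is central. The one delicate step, and the part I would write out in full, is the identification $Z(B)\otimes_k k'\cong Z(B\otimes_k k')$: it relies on finite projectivity both to commute $End$ with the base change and to keep the defining sequence of the centre exact after tensoring, and on faithful flatness to transport the conclusion back to $k$. Everything else is either the explicit centre computation for matrix algebras or the bookkeeping of the tree.
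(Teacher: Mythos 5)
Your proposal is essentially the paper's own argument written out in full: the paper's proof is exactly this tree induction via Theorem \ref{main}, with the centre computation for matrix algebras at the leaves and a one-line remark that centrality ``descends to the root'', for which your kernel sequence $0\to Z(B)\to B\to End_k(B)$, the identification $End_k(B)\otimes_k k'\cong End_{k'}(B\otimes_k k')$ for $B$ finite projective, and faithfully flat descent supply precisely the missing details. One caveat: you formulate centrality as $k\to Z(A)$ being an \emph{isomorphism}, but the paper explicitly allows the trivial algebra as an Azumaya algebra, for which this fails over nontrivial $k$ (and correspondingly your base case ``$S\to Z(M_n(S))$ is an isomorphism'' fails for $n=0$); the paper only claims surjectivity (any $a$ commuting with all of $A$ lies in the base ring), and your argument proves that version uniformly, since your identification $Z(B)\otimes_k k'\cong Z(B\otimes_k k')$ together with right exactness of $\otimes_k k'$ shows the cokernel of $k\to Z(B)$ vanishes after a faithfully flat extension, hence vanishes.
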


\begin{proof}
  This means that if $a$ in $A$ satisfies $ax = xa$ for all $x$ in $A$ then $a$ is in $k$. We prove
  this by tree induction using Theorem~\ref{main}. The statement holds on the leaves (it holds for
  a matrix algebra) and descends to the root.
\end{proof}

\begin{lemma}\label{automorphism}
  If $R$ is a local ring and $A$ a matrix algebra of rank $\geqslant 1$
  \cite[Definition~X-2.2]{LQ}
  then any automorphism of $A$ is of the form
  $x\mapsto axa^{-1}$ for some $a$ in $\GL_n(A)$.
\end{lemma}

\begin{proof}
  Let $A = \End_R(R^n)$ with $v_1,\dots,v_n$ the canonical basis of $R^n$ and define $e_{ij}v_k = \delta_{jk}v_i$.
  Let $\psi$ be an automorphism of $A$. If $p_{ij} = \psi(e_{ij})$, we have
  that the $p_i = p_{ii}$ form an FSOI and $p_{ij}p_{kl} = \delta_{jk}p_{il}$.
  Since $R$ is local, each $V_i = p_i(R^n)$ is free.  Furthermore $R^n$ is a direct sum of the $V_i$'s; the $V_i$'s
  are pairwise isomorphic since $p_{ji}$ defines an isomorphism between $V_i$ and $V_j$.  So they are all free of
  rank $1$.
  Let $w_1$ be a basis of $V_1$ and $w_j = p_{j1}w_1$. If $a(v_i)=w_i$ then we have $\psi(x) = axa^{-1}$.
\end{proof}

\begin{theorem}[Skolem-Noether]
  If $A$ is an Azumaya algebra, of rank $\geqslant 1$
  as a projective module over a ring $k$,
  and $\psi$ is an automorphism of $A$, then
  $M = \{ a\in A~|~\forall_x\, ax = \psi(x)a\}$ is a projective $k$-module of rank $1$.
\end{theorem}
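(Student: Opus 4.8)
The plan is to realise $M$ as the kernel of an explicit $k$-linear map between finite projective modules, to check that its formation commutes with flat base change, and then to argue by tree induction along the tree supplied by Theorem \ref{main}, reducing the whole statement to the case of a matrix algebra. First I would note that $M$ really is a $k$-submodule of $A$: by Lemma \ref{central} the ground ring $k$ is the centre of $A$, so if $c\in k$ and $a\in M$ then $(ca)x = c(ax) = c\,\psi(x)a = \psi(x)(ca)$ for all $x$, whence $ca\in M$. Concretely $M = \ker\Phi$, where $\Phi\colon A\rightarrow Hom_k(A,A)$ sends $a$ to the map $x\mapsto ax-\psi(x)a$. Since $A$ is projective of finite type over $k$, both $A$ and $Hom_k(A,A)$ are finite projective; using $Hom_k(A,A)\otimes_k S\cong Hom_S(A\otimes_k S,\,A\otimes_k S)$ for any $k$-algebra $S$, together with the fact that the kernel of a map of modules commutes with flat base change, one gets that $M\otimes_k S = \{\,a\in A\otimes_k S \mid ax=\psi_S(x)a\,\}$ is exactly the module attached in the same way to the base-changed algebra $A\otimes_k S$ and to the base-changed automorphism $\psi_S=\psi\otimes \mathrm{id}_S$.

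Next I would run the tree induction of Theorem \ref{main}. Both covering operations $S\rightarrow S[1/u_i]$ and $S\rightarrow S[X]/(P)$, for $P$ monic unramifiable, are flat, so by the previous paragraph the module attached to the algebra is transported correctly at every node of the tree; moreover, as recorded just after Theorem \ref{main}, the family $S\rightarrow \prod_i S[1/u_i]$ (when $1=(u_1,\dots,u_n)$) and the single extension $S\rightarrow S[X]/(P)$ are faithfully flat. Since being finite projective of rank $1$ is a local property for the Zariski topology (local--global principle) and descends along faithfully flat extensions (\cite{LQ}), it suffices to establish the conclusion at the leaves, that is, when $A$ is a matrix algebra over the leaf ring.

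It then remains to treat the leaf case $A=End_S(S^n)$. Here I would use the local--global principle once more to reduce to $S$ local, a finitely generated module being projective of rank $1$ exactly when it becomes free of rank $1$ on a Zariski cover. For $S$ local the lemma above shows that $\psi$ is inner, say $\psi(x)=a_0 x a_0^{-1}$ with $a_0\in GL_1(A)$; then $a\in M$ if and only if $a_0^{-1}a$ commutes with every $x$, that is, lies in the centre of $A$, which is $S\cdot 1$. Hence $M = S\,a_0$, and since $a_0$ is invertible the map $s\mapsto s a_0$ is injective, so $M$ is free of rank $1$.

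I expect the main obstacle to be the bookkeeping of the descent step rather than the leaf computation. One must make sure that the hypotheses for faithfully flat descent of finite projectivity and of rank are genuinely met: this is where the finiteness of $M$ enters, via its presentation as the kernel of a map of finite projective modules and via the observation that at each leaf $M\otimes_k S$ is a direct summand of $A\otimes_k S$ (the unimodular $a_0$ extending to a basis). Keeping the base change of $M$ and the base change of the automorphism $\psi$ synchronised at every node, and invoking the constructive machinery of \cite{LQ} for étale descent of rank-$1$ projectives, is the delicate part; once this is in place, the inner-automorphism computation at the leaves closes the induction.
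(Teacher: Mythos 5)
Your proposal is correct and follows essentially the same route as the paper: tree induction along Theorem \ref{main}, with faithfully flat descent and the local-global principle for the inductive step, and the inner-automorphism lemma together with centrality (Lemma \ref{central}) at the leaves, where $M = S\,a_0$ is free of rank $1$. You merely spell out what the paper's one-line proof leaves implicit — the kernel presentation of $M$, its compatibility with flat base change so that the module stays synchronised with $\psi$ along the tree, and the extra Zariski-local reduction at the leaves (the paper's parenthetical claim that $M$ is free there tacitly presupposes a local leaf ring) — which is a welcome filling-in of detail rather than a different argument.
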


\begin{proof}
  We use Theorem~\ref{main}. The statement holds on the leaves ($M$ is even \EMP{free} of rank $1$
  on the leaves by Lemma~\ref{automorphism}) and descends to the root by
  tree induction using Theorem~\ref{descent}.
\end{proof}

If $k$ is local, the module $M$ is free of rank $1$ and any generator $u$ of $M$ is invertible (since it is
invertible on the leaves) and $\psi$ is the inner automorphism defined by $u$, so we recover the usual
form of the Skolem-Noether Theorem (any automorphism of a central simple algebra is an inner automorphism).

Claude Quitté has suggested the following refinement.

\begin{theorem}
  If $A$ is an Azumaya algebra, of rank $\geqslant 1$ as a projective module over a ring $k$,
  and $\psi$ is an automorphism of $A$, then
  $$M = \{ a\in A~|~\forall_x\, ax = \psi(x)a\}\quad\text{and}\quad
  N = \{ a\in A~|~\forall_x\, a\psi(x) = xa\}$$
  are      projective $k$-modules of rank $1$ and $M\otimes_k N\simeq k$ and $MN = k1$.
  Furthermore, $M$ and $N$ are direct factors in $A$.
\end{theorem}

\begin{proof}
  We have on the leaves $M_SN_S \subseteq S 1$ and this statement descends to the root by tree induction.
  This can be used to define a map $M\rightarrow \Hom(N,k),~x\mapsto (y\mapsto xy)$. This map
  is an isomorphism on the leaves, and this statement also descends to the root by tree induction.
  Finally, $M$ and $N$ are direct factors on the leaves, and this fact descends as
  well.
\end{proof}

\section{Theory of algebraically closed local rings}

To get the theory of algebraically closed local rings, we add to the theory of local rings the axioms
$$\exists_x~f(x) = 0$$
for any monic nonconstant polynomial $f$.

Wraith conjectured in \cite{Wraith} that this should define the classifying theory of the
fppf topos. This question is discussed further in Blechschmidt's Ph.D. thesis \cite{Ingo}.

The same argument as in Lemma~\ref{main1} shows the following result.

\begin{lemma}
  If $A$ is an Azumaya algebra over $R$, which is a residually
  discrete algebraically closed local commutative $k$-algebra, then $A$ is a matrix algebra.
\end{lemma}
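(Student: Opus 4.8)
The plan is to recognise that the present hypotheses are merely a strengthening of those of Lemma~\ref{main1}, so that the same three-step argument applies without change. First I would note that a residually discrete local ring in which every monic nonconstant polynomial has a root is in particular strictly Henselian: any monic unramifiable polynomial is monic and nonconstant, hence has a root, so $R$ is separably closed in the sense of this paper, and by the theorem on Wraith's axiomatisation $R$ is then Henselian with $R/\mm$ separably closed (indeed $R/\mm$ is algebraically closed). Thus everything the proof of Lemma~\ref{main1} consumes is already in place.

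Next I would rerun that proof verbatim. Since $R/\mm$ is separably closed, the Azumaya algebra $A/\mm A$ over the field $R/\mm$ is split by \cite{CLN}, yielding a family of matrix units $e_{ij}$ modulo~$\mm$. As $A$ is projective of finite type over $R$ it is finitely generated as an $R$-module, and $R$ is Henselian and residually discrete, so Theorem~\ref{thm1} lifts the idempotent $e_{11}$ to an idempotent $e$ in $A$ with $e = e_{11}$ mod.~$\mm$. The left ideal $Ae$ is then a projective $R$-module, hence free because $R$ is local, say of rank $n$.

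Finally I would verify that the canonical map $A\rightarrow End_R(Ae)$ given by left multiplication is an isomorphism. Both sides are free $R$-modules of the same rank $n^2$, and after reduction mod.~$\mm$ the map becomes the matrix-algebra splitting of $A/\mm A$; hence the determinant of the matrix representing it is invertible modulo~$\mm$, and since $R$ is local this determinant is a unit in $R$, so the map is bijective. Therefore $A\cong End_R(Ae)$ is a matrix algebra.

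The step that deserves the most care is purely one of bookkeeping: isolating the fact that the closure hypothesis is used in exactly one place, namely the residual splitting of $A$ over $R/\mm$, and that separable closure already suffices there, so that the strictly stronger assumption of algebraic closure is harmlessly more than the argument requires. Once this is seen there is no genuine obstacle; indeed one could phrase the result as a direct appeal to Lemma~\ref{main1}, since an algebraically closed residually discrete local ring is \emph{a fortiori} separably closed, and I would retain the explicit repetition only to make transparent that the fppf-flavoured hypothesis of this section is fully compatible with the \'etale-flavoured argument of the previous one.
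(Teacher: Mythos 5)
Your proposal is correct and coincides with the paper's own treatment: the paper disposes of this lemma with the single remark that ``the same argument as in Lemma~\ref{main1} shows the following result,'' which is precisely your plan of rerunning the splitting--lifting--determinant argument verbatim. Your explicit preliminary reduction (algebraically closed $\Rightarrow$ strictly Henselian, hence by Wraith's theorem Henselian with separably -- indeed algebraically -- closed residue field) merely spells out what the paper leaves implicit, and is a welcome clarification rather than a deviation.
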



We can then prove as in the previous section
\begin{theorem}\label{main2}
  $A$ is an Azumaya algebra over $k$ if, and only if, we can build a finite covering tree rooted at~$k$
  using the following, where all leaves are matrix algebras:
  \begin{enumerate}
  \item $S$ is covered by $S\rightarrow S[1/s_1]$, \dots, $S\rightarrow S[1/s_n]$ if $1 = (s_1,\dots, s_n)$;
  \item $S$ is covered by $S\rightarrow S[X]/(f)$ if $f$ is monic nonconstant.
  \end{enumerate}
\end{theorem}

\section{Conclusion}

This paper is a beginning of the theory of Azumaya algebras in a constructive setting. The natural next step
will be to analyse Gabber's Theorem  \cite{Kervaire} that the group of Azumaya algebras 
is the torsion part of the second cohomology group with coefficients in $\GL_n(k)$ for the étale topology.


\begin{thebibliography}{9}

\bibitem[ALP]{ALP}
  M. E. Alonso García, H. Lombardi, and H. Perdry.
  \newblock Elementary constructive theory of Henselian local rings.
  \newblock \emph{Mathematical Logic Quarterly} 54, 253--271, 2008.

\bibitem[Avi]{Avigad}
  J. Avigad.
  \newblock Interpreting classical theories in constructive ones.
  \newblock \emph{Journal of Symbolic Logic} 65, 1785--1812, 2000.
  
\bibitem[Bish]{Bishop}
  E. Bishop.
  \newblock \emph{Foundations of constructive analysis}.
  \newblock {McGraw}-{Hill}, 1967.

\bibitem[BT]{Ingo}
  I. Blechschmidt.
  \newblock \emph{Using the internal language of toposes in algebraic geometry}.
  \newblock Ph.D. thesis, University of Augsburg, 2017.
  
\bibitem[CLN]{CLN}
  Th.\ Coquand, H.\ Lombardi, and St.\ Neuwirth.
  \newblock Constructive basic theory of central simple algebras.
  \newblock arXiv preprint, 2021.

\bibitem[CLR]{CLR}
  M. Coste, H. Lombardi and M.-Fr.\ Roy.
  \newblock Dynamical method in algebra: effective Nullstellensätze.
  \newblock \emph{Annals of Pure and Applied Logic} 111, 203--256, 2001.

\bibitem[Joh]{topos}
  P. T. Johnstone.
  \newblock \emph{Topos theory}.
  \newblock London Mathematical Society Monographs 10. Academic Press, 1977.

\bibitem[Joy]{Joyal}
  A. Joyal.
  \newblock Les théorèmes de {Chevalley}-{Tarski} et remarques sur l'algèbre constructive.
  \newblock \emph{Cahiers de Topologie et Géométrie différentielle} 16, 256--258, 1975.

\bibitem[KO]{Kervaire}
  M. Kervaire and M. Ojanguren (eds.).
  \newblock \emph{Groupe de Brauer: séminaire, Les Plans-sur-Bex, Suisse 1980}.
  \newblock Lecture Notes in Mathematics 844. Springer, 1981.

\bibitem[LQ]{LQ}
  H.\ Lombardi and Cl.\ Quitté.
  \newblock \emph{Commutative Algebra: Constructive Methods}.
  \newblock Springer, 2015.

\bibitem[LQ2]{LQ2}
  H.\ Lombardi and Cl.\ Quitté.
  \newblock \emph{Algèbre commutative~: méthodes constructives}.
  \newblock 2nd revised and extended edition.
  \newblock Calvage \& Mounet, 2021.

\bibitem[MC]{Mannaa}
  B. Mannaa and Th.\ Coquand.
  \newblock Dynamic Newton-Puiseux Theorem.
  \newblock \emph{Journal of Logic and Analysis} 5, 2013.

\bibitem[Milne]{Milne}
  J. St.\ Milne.
  \newblock \emph{Étale cohomology}.
  \newblock Princeton Mathematical Series 33, Princeton University Press, 1980.

\bibitem[Penon]{Penon}
  J. Penon.
  \newblock De l'infinitésimal au local.
  \newblock Thèse d'État, Université Paris 7, 1985.

\bibitem[Stacks]{Stacks}
  The {Stacks project authors}.
  \newblock \textit{Stacks project}.
  \newblock \url{https://stacks.math.columbia.edu}, 2026.

\bibitem[Wraith]{Wraith}
  G. Wraith.
  \newblock Generic Galois theory of local rings.
  \newblock In M. P. Fourman, C. J. Mulvey, and D. S. Scott (eds.), \emph{Applications of sheaves: proceedings of the research symposium on applications of sheaf theory to logic, algebra, and analysis, Durham, July 9--21, 1977}, Lecture Notes in Mathematics 753, 739--767, 1979.

\end{thebibliography}
\end{document}